\documentclass[a4paper,12pt,reqno]{amsart}

\usepackage{amsmath}
\usepackage{amssymb}
\usepackage{amsfonts}
\usepackage{graphicx}
\usepackage[colorlinks]{hyperref}
\renewcommand\eqref[1]{(\ref{#1})} 

\graphicspath{ {images/} }
\setlength{\textwidth}{15.2cm}
\setlength{\textheight}{22.7cm}
\setlength{\topmargin}{0mm}
\setlength{\oddsidemargin}{3mm}
\setlength{\evensidemargin}{3mm}
\setlength{\footskip}{1cm}

\title[Principal frequency of $p$-sub-Laplacians]{Principal frequency of $p$-sub-Laplacians for general vector fields}
\author[Michael Ruzhansky]{Michael Ruzhansky}
\address{\href{www.ruzhansky.org}{Michael Ruzhansky:}
	\endgraf
	Department of Mathematics: Analysis, Logic and Discrete Mathematics
	\endgraf
	Ghent University, Belgium
	\endgraf
	and
	\endgraf
	School of Mathematical Sciences
		\endgraf Queen Mary University of London 
			\endgraf
		United Kingdom
			\endgraf
	{\it E-mail address} {\rm Michael.Ruzhansky@ugent.be}
}

\author[Bolys Sabitbek]{Bolys Sabitbek}
\address{ \href{https://www.researchgate.net/profile/Bolys_Sabitbek3}{Bolys Sabitbek:}
		\endgraf
	School of Mathematical Sciences
	\endgraf Queen Mary University of London 
	\endgraf
	United Kingdom,
	\endgraf
	and
	\endgraf 
	Department of Mechanics and Mathematics
	\endgraf 
	Al-Farabi Kazakh National University 
	\endgraf
	71 al-Farabi Ave., Almaty, 050040 
	\endgraf Kazakhstan
	\endgraf
	and
		\endgraf
	Institute of Mathematics and Mathematical Modeling 
	\endgraf
	125 Pushkin Street., Almaty, 050010
	\endgraf
	Kazakhstan
	\endgraf
	{\it E-mail address} {\rm b.sabitbek@qmul.ac.uk}
}

\author[Durvudkhan Suragan]{Durvudkhan Suragan}
\address{\href{https://sst.nu.edu.kz/en/durvudkhan-suragan-phd/}{Durvudkhan Suragan:}
	\endgraf
	Department of Mathematics
	\endgraf
	Nazarbayev University
	\endgraf
	53 Kabanbay batyr Ave., Astana, 010000
	\endgraf
	Kazakhstan
	\endgraf
	{\it E-mail address} {\rm durvudkhan.suragan@nu.edu.kz}
}


\subjclass{35P30, 35H99.}
\keywords{p-sub-Laplacian, smooth manifold, principal frequency, Picone's identity, Caccioppoli inequality}

\thanks{The first
 author was supported by FWO Odysseus 1 grant G.0H94.18N: Analysis and Partial Differential Equations and EPSRC grant EP/R003025/1. The second author was supported in parts by the MES RK grant AP08053051 and EPSRC grant EP/R003025/1. The third author was supported by the Nazarbayev University program 091019CRP2120 and the Nazarbayev University grant 240919FD3901. }


\newtheoremstyle{theorem}
{10pt}          
{10pt}  
{\sl}  
{\parindent}     
{\bf}  
{. }    
{ }    
{}     
\theoremstyle{theorem}

\numberwithin{equation}{section}
\theoremstyle{plain}
\newtheorem{thm}{Theorem}[section]

\newtheorem{cor}[thm]{Corollary}
\newtheorem{lem}[thm]{Lemma}

\theoremstyle{definition}

\newtheoremstyle{defi}
{10pt}          
{10pt}  
{\rm}  
{\parindent}     
{\bf}  
{. }    
{ }    
{}     
\theoremstyle{defi}



\begin{document}
		\begin{abstract}
		In this paper, we prove the uniqueness and simplicity of the principal frequency (or the first eigenvalue) of the Dirichlet $p$-sub-Laplacian for general vector fields. As a byproduct, we establish the Caccioppoli inequalities and also discuss the particular cases on the Grushin plane and on the Heisenberg group.
	\end{abstract}
	\maketitle
\section{Introduction}

Let $M$ be a smooth manifold of dimension $n$ with a volume form $dx$. Let $\{X_k\}_{k=1}^N$ with $n\geq N$ be a family of vector fields defined on $M$. We consider the operator
\begin{equation}
	\mathcal{L} :=\sum_{k=1}^{N}X^*_kX_k.
\end{equation}
It is is locally hypoelliptic if the commutators of the vector fields $\{X_k\}_{k=1}^N$ generate the tangent space of $M$ as the  Lie algebra. 

In addition, we define the $p$-sub-Laplacian for general vector fields by the formula
\begin{equation}\label{Lp}
	\mathcal{L}_p f := \nabla_X^* \cdot \left( |\nabla_X f|^{p-2}\nabla_X f\right), \, 1<p<\infty,
\end{equation} 
and the horizontal gradients
\begin{equation*}
	\nabla_X :=(X_1,\ldots,X_N)\,\,\, \text{and} \,\,\, \nabla_{X}^*:=(X_1^*,\ldots,X_N^*),
\end{equation*}
where 
\begin{equation*}
	X_k = \sum_{j=1}^{n}a_{kj}(x)\frac{\partial}{\partial x_j},\,\, k=1,\ldots, N,
\end{equation*}
with the formal adjoint
\begin{equation*}
X_k^* = -\sum_{j=1}^{n}\frac{\partial}{\partial x_j}(a_{kj}(x)),\,\, k=1,\ldots, N.
\end{equation*}
Let $\Omega \subset M$ be an open set. We define the functional spaces
\begin{equation}
	S^{1,p}(\Omega) =\{ u: \Omega \rightarrow \mathbb{R}; u, |\nabla_X u| \in L^p(\Omega) \}.
\end{equation}
We also consider the following functional 
\begin{equation}
	J_p(u) = \left( \int_{\Omega} |\nabla_X u|^p dx \right)^{\frac{1}{p}}.
\end{equation}
Now let us define the functional class $\mathring{S}^{1,p}(\Omega)$ to be the completion of $C_0^1(\Omega)$ in the norm generated by $J_p$, see in \cite{CDG1993}.

We consider the following Dirichlet boundary value problem for $\mathcal{L}_p$:
\begin{equation}\label{Dirichlet_BV}
\begin{cases}
	\mathcal{L}_p u = \lambda |u|^{p-2}u, \,\,& \text{in} \,\, \Omega,	\\
u=0,\,\, & \text{on} \,\, \partial \Omega. 
\end{cases}
\end{equation}
In the Euclidean setting, the first eigenvalue for the Dirichlet boundary value problem for $\mathcal{L}_p$ was obtained by Lindqvist in \cite{Lindqvist}. 
In the sub-elliptic setting, the study of boundary value problems started by the pioneering works of Bony \cite{Bony}, Gaveau \cite{Gaveau}, Kohn and Nirenberg \cite{Kohn-Nirenberg}. In 1981, the Dirichlet problem for the Kohn Laplacian on the Heisenberg group was studied by Jerison in \cite{Jer_1, Jer_2} as a part of his dissertation under the supervision of E. Stein. Also,  semilinear equations on the Heisenberg group for sums of vector fields were studied in \cite{YZP05} and \cite{Niu99}. Those works attracted significant attention to boundary value problems for the sub-elliptic operators, see e.g. \cite{CG98}, \cite{CGN02}, \cite{Danielli95}, \cite{DGMN}, \cite{GV00} and  references therein.
 
As usual, a weak solution of equation \eqref{Dirichlet_BV} means a function $u \in \mathring{S}^{1,p}(\Omega)$ such that 
\begin{equation}\label{eq_weak_sol}
	\int_{\Omega}  |\nabla_X u|^{p-2} \langle \nabla_X u, \nabla_X \phi \rangle dx - \lambda \int_{\Omega}|u|^{p-2}u \phi dx = 0,
\end{equation}
for all $\phi \in \mathring{S}^{1,p}(\Omega)$.
Similarly, by the sup-solution and sub-solution  of equation \eqref{Dirichlet_BV} we mean a function $u \in \mathring{S}^{1,p}(\Omega)$ such that 
\begin{equation}\label{inq_sup}
	\int_{\Omega}  |\nabla_X u|^{p-2}\langle \nabla_X u, \nabla_X \phi \rangle  dx - \lambda \int_{\Omega}|u|^{p-2}u \phi dx \geq 0,
\end{equation}
and
\begin{equation}\label{inq_sub}
	\int_{\Omega}  |\nabla_X u|^{p-2}\langle \nabla_X u, \nabla_X \phi \rangle dx - \lambda \int_{\Omega}|u|^{p-2}u \phi dx \leq 0,
\end{equation} 
for all $\phi \in \mathring{S}^{1,p}(\Omega)$, respectively.
Here $\lambda \in \mathbb{R}$ and $u$ are called a Dirichlet eigenvalue and eigenfunction of the operator $\mathcal{L}_p$ in $\Omega \subset M$, respectively. If we put $u$ instead of $\phi$ in \eqref{eq_weak_sol},  we have 
\begin{equation}
	\int_{\Omega} |\nabla_Xu|^p dx = \lambda \int_{\Omega} |u|^p dx,
\end{equation}
which can be written for $u \neq 0$ as 
\begin{equation*}
	\lambda := \frac{\int_{\Omega} |\nabla_Xu|^p dx}{\int_{\Omega} |u|^p dx}.
\end{equation*}
Thus, one can define the principal Dirichlet frequency $\lambda_1(\Omega)$ of the operator $\mathcal{L}_p$ as the minimum of the Rayleigh quotient $\int_{\Omega} |\nabla_Xu|^p dx / \int_{\Omega} |u|^p dx$ taken over all nontrivial functions $u \in \mathring{S}^{1,p}(\Omega)$, that is,  
\begin{equation}\label{lambda_1}
	\lambda_1(\Omega):= \min_{\mathring{S}^{1,p}(\Omega)} \left\{ \int_{\Omega} |\nabla_X u|^p dx : \int_{\Omega} |u|^p dx =1\right \}.
\end{equation}

The main aim of this paper is to prove uniqueness, simplicity, and a domain monotonicity of the principal frequency (or the first eigenvalue) of the Dirichlet $p$-sub-Laplacian for general vector fields \eqref{Lp}.
We also present the Caccioppoli inequality for this operator in the form 
\begin{equation*}
	\int_{\Omega} \phi^p|\nabla_X v|^p dx \leq p^p \int_{\Omega} v^p |\nabla_X \phi|^p dx,
\end{equation*}
for every $0\leq \phi \in C_0^{\infty}(\Omega)$, where $v>0$ is a sub-solution of \eqref{Dirichlet_BV} in $\Omega \subset M$.

Picone's identity for general vector fields plays an important role in some of our computations.
Recall Picone's identity for differentiable functions $u$ and $v$ with $v(x)\neq 0$ in $\Omega \subset \mathbb{R}^n$:
\begin{equation}\label{PiconeL_2}
	|\nabla u|^2 - \left\langle \nabla v, \nabla \left( \frac{|u|^2}{v}\right) \right\rangle = |\nabla u|^2 + \frac{|u|^2}{|v|^2} |\nabla v|^2 - 2 \frac{u}{v} \langle \nabla v, \nabla u\rangle \geq 0,
\end{equation}
where $\nabla$ and $\langle \cdot, \cdot\rangle$ are the standard gradient and the inner product in $\mathbb{R}^n$, respectively.
The Picone identity is one of the important tools in the  theory of partial differential equations (see, e.g. \cite{Swan}). As consequences of Picone's identity, one can obtain, for instance, the simplicity of principal eigenvalues, Barta's inequalities, nonexistence of positive solutions, Hardy's inequalities, and Sturmian comparison. In the Euclidean setting, the Picone identity was investigated by many authors and generalised in different directions. For example, in \cite{AH98} Allegato and Huang extended the Picone identity to the case of any $p>1$.  Recently, in \cite{Jaros_A-horm} and \cite{Jaros_Cacci} Jaros  presented the Picone identity for the Finsler $p$-Laplacian and obtained the Caccioppoli inequality, which has the form 
\begin{equation}\label{eq_Cacci_L2}
\int_{\Omega} |\phi \nabla v|^2 dx \leq 4 \int_{\Omega} |v \nabla \phi|^2 dx,
\end{equation} 
where $v>0$ is a weak solution of $\Delta v=0$ in $\Omega \subset \mathbb{R}^n$ and $0\leq \phi \in C^{\infty}_0(\Omega)$ is a test function. The present paper is motivated by Jaros' results. The inequality \eqref{eq_Cacci_L2} can be derived from \eqref{PiconeL_2} by integrating over $\Omega$ with $u=v\phi$, then applying the Cauchy-Schwartz inequality and the Young inequality. The $L^p$-version of inequality \eqref{eq_Cacci_L2} was obtained in \cite{IS2003}, \cite{LLM2007}, and \cite{PRS2008}. Note that the authors in \cite{RSS_VF} have obtained the weighted anisotropic Hardy and Rellich inequalities making use of the (first and second order) anisotropic Picone identities. We also refer to a recent open access book \cite{RS_book} for further discussions in this direction. Here, we establish a version of Picone's identity for general vector fields:  
\begin{lem}\label{lem_Picone}
	Let $\Omega \subset M$ be an open set. Let $p>1$. Let $u$ and $v$ be differentiable in a given set $\Omega$ with $v(x)\neq 0$ in $\Omega$. Define
	\begin{align}
	&L(u,v):= |\nabla_X u|^p -p\frac{|u|^{p-2}u}{|v|^{p-2}v} |\nabla_Xv|^{p-2}   \langle \nabla_{X} v, \nabla_{X} u\rangle     + (p-1)\frac{|u|^p}{|v|^p} |\nabla_Xv|^p, \\
	&R(u,v):= |\nabla_Xu|^p -  |\nabla_Xv|^{p-2} \langle \nabla_{X} v, \nabla_{X} \left( \frac{|u|^p}{|v|^{p-2}v} \right)\rangle.
	\end{align}
	 Then we have
	\begin{equation}
		L(u,v)=R(u,v) \geq 0. 
	\end{equation}
	Moreover, we have $L(u,v)=0$ a.e. in $\Omega$ if and only if $u=cv$ a.e. in $\Omega$ with a positive constant $c$.
\end{lem}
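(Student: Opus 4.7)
The proof of the identity $L(u,v)=R(u,v)$ is a direct computation that transports the classical Allegretto--Huang calculation to the horizontal setting, because every $X_k$ is a first-order differential operator and therefore satisfies the Leibniz and chain rules pointwise. First I would note that for $v(x)\neq 0$ one has $X_k(|v|^{p-2}v)=(p-1)|v|^{p-2}X_kv$ (differentiating the scalar function $t\mapsto |t|^{p-2}t$) and $X_k(|u|^p)=p|u|^{p-2}u\,X_ku$. Applying the quotient rule then gives
\begin{equation*}
	X_k\!\left(\frac{|u|^p}{|v|^{p-2}v}\right)=\frac{p\,|u|^{p-2}u}{|v|^{p-2}v}X_ku-(p-1)\frac{|u|^p}{|v|^p}X_kv.
\end{equation*}
Multiplying by $|\nabla_Xv|^{p-2}X_kv$ and summing over $k$ expresses the last term of $R(u,v)$ exactly as the last two terms of $L(u,v)$, giving $R(u,v)=L(u,v)$.

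For the inequality $L(u,v)\geq 0$ I would first use the Cauchy--Schwarz inequality on the horizontal inner product, together with the pointwise bound $\bigl|\tfrac{|u|^{p-2}u}{|v|^{p-2}v}\bigr|=\tfrac{|u|^{p-1}}{|v|^{p-1}}$, to estimate the cross term by
\begin{equation*}
	p\,\frac{|u|^{p-2}u}{|v|^{p-2}v}|\nabla_Xv|^{p-2}\langle\nabla_Xv,\nabla_Xu\rangle\leq p\,\frac{|u|^{p-1}}{|v|^{p-1}}|\nabla_Xv|^{p-1}|\nabla_Xu|.
\end{equation*}
Then I would apply Young's inequality with conjugate exponents $p$ and $p/(p-1)$ to $a=|\nabla_Xu|$ and $b=\tfrac{|u|^{p-1}}{|v|^{p-1}}|\nabla_Xv|^{p-1}$, obtaining $p\,ab\leq a^p+(p-1)b^{p/(p-1)}=|\nabla_Xu|^p+(p-1)\tfrac{|u|^p}{|v|^p}|\nabla_Xv|^p$. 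Substituting back yields $L(u,v)\geq 0$.

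For the equality statement, I would trace back when each of the two inequalities above is an equality. Cauchy--Schwarz saturates precisely when $\nabla_Xu$ and $\nabla_Xv$ are positively proportional with the correct sign (so that the signed factor $\tfrac{|u|^{p-2}u}{|v|^{p-2}v}$ matches the modulus $\tfrac{|u|^{p-1}}{|v|^{p-1}}$, forcing $u$ and $v$ to share sign where they are nonzero), while Young's inequality gives equality only when $|v|\,|\nabla_Xu|=|u|\,|\nabla_Xv|$. Combining these two conditions yields $v\,\nabla_Xu=u\,\nabla_Xv$ almost everywhere, i.e.\ $\nabla_X(u/v)=0$ a.e. Conversely $u=cv$ with $c>0$ plainly gives $L(u,v)=0$.

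The main obstacle is the last step: concluding from the pointwise vanishing of the horizontal gradient of $u/v$ that $u/v$ is actually a positive constant. In the Euclidean case one simply invokes connectedness of $\Omega$, but here the vector fields need not span the tangent space pointwise. I would resolve this by invoking the Hörmander/bracket-generating hypothesis stated at the beginning of the introduction, which together with the Chow--Rashevskii theorem makes $\Omega$ horizontally connected and forces any function with $X_ku/v=0$ for all $k$ to be constant a.e.\ on each connected component; the positivity of $c$ is then dictated by the sign-matching already extracted from the Cauchy--Schwarz equality case.
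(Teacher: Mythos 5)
Your proposal is correct, but note that the paper deliberately omits any proof of this lemma, stating that it is a direct reworking of Lemma 2.1 of Niu--Zhang--Wang \cite{NZW01}; your argument (quotient/chain rule for $L=R$, Cauchy--Schwarz plus Young's inequality with exponents $p$ and $p/(p-1)$ for nonnegativity, and tracing the equality cases to $\nabla_X(u/v)=0$) is exactly that standard argument transported to the horizontal gradient. Your final step --- observing that $\nabla_X(u/v)=0$ a.e.\ only forces $u/v$ to be constant if one invokes bracket generation and Chow--Rashevskii horizontal connectivity of $\Omega$ --- is a genuine point of care that the paper leaves entirely implicit by deferring to the literature, so if anything your write-up is more complete than the source.
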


A Carnot group (stratified group) version of the Picone identity was obtained in \cite{RS_Green} for a general case. Also, Niu, Zhang, and Wang in \cite{NZW01} obtained the Picone identity on the Heisenberg group and remarked that it could be extended to general vector fields satisfying H\"ormander's condition. This idea was later extended in \cite[Section 11.6]{RS_book}. Indeed, all the extensions are a direct reworking of the proof \cite[Lemma 2.1]{NZW01}. Therefore, here we omit the proof of  Lemma \ref{lem_Picone}. 

The paper is organised in the following way: 
In Section \ref{Sec3}, we prove the uniqueness and simplicity of the principal frequency (or the first eigenvalue) of the Dirichlet $p$-sub-Laplacian for general vector fields.
In Section \ref{Sec4}, we obtain the Caccioppoli inequalities for general vector fields making use of Picone's identity. Then, we present some examples on the Grushin plane and the Heisenberg group.

\section{Principal frequency of $p$-sub-Laplacians}\label{Sec3}
\begin{lem}\label{lem_J}
	Assume that there exists a strictly positive sup-solution of \eqref{Dirichlet_BV}. Then we have
	\begin{equation}\label{Functional_2}
\int_{\Omega} |\nabla_X u|^p dx \geq \lambda \int_{\Omega} |u|^p dx,
\end{equation}
for all functions $u \in \mathring{S}^{1,p}(\Omega)$.
\end{lem}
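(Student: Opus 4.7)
The plan is to combine Picone's identity (Lemma \ref{lem_Picone}) with the defining inequality of a sup-solution, using the test function $\phi = |u|^p / v^{p-1}$. First I would reduce to $u \in C_0^1(\Omega)$ by density of $C_0^1(\Omega)$ in $\mathring{S}^{1,p}(\Omega)$, and only then pass to the limit to obtain the inequality for arbitrary $u \in \mathring{S}^{1,p}(\Omega)$. This reduction is convenient because it makes $\operatorname{supp}(u)$ a compact subset of $\Omega$, so the strictly positive sup-solution $v>0$ is bounded below by some $\delta>0$ on $\operatorname{supp}(u)$, which is exactly what makes the candidate test function well defined.

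Next, I would take this candidate $\phi := |u|^p / v^{p-1}$. By the chain rule,
\begin{equation*}
\nabla_X \phi = p\,|u|^{p-2}u \,\frac{\nabla_X u}{v^{p-1}} - (p-1)\,\frac{|u|^p}{v^{p}}\,\nabla_X v,
\end{equation*}
and the lower bound on $v$ over $\operatorname{supp}(u)$, together with $u \in C_0^1$ and $v$ locally regular, yields $\phi \in \mathring{S}^{1,p}(\Omega)$ with $\phi \geq 0$. Now I would apply Lemma \ref{lem_Picone} pointwise in $\Omega$: since $L(u,v) = R(u,v) \geq 0$, integrating $R(u,v)$ gives
\begin{equation*}
\int_{\Omega} |\nabla_X u|^p\, dx \;\geq\; \int_{\Omega} |\nabla_X v|^{p-2} \Big\langle \nabla_X v,\, \nabla_X \Big(\tfrac{|u|^p}{v^{p-1}}\Big) \Big\rangle\, dx \;=\; \int_{\Omega} |\nabla_X v|^{p-2} \langle \nabla_X v, \nabla_X \phi\rangle\, dx.
\end{equation*}

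Finally, since $\phi \in \mathring{S}^{1,p}(\Omega)$ is non-negative, the sup-solution property \eqref{inq_sup} (understood for non-negative test functions, as is standard) applied to $v$ gives
\begin{equation*}
\int_{\Omega} |\nabla_X v|^{p-2}\langle \nabla_X v, \nabla_X \phi\rangle\, dx \;\geq\; \lambda \int_{\Omega} v^{p-1}\phi\, dx \;=\; \lambda \int_{\Omega} |u|^p\, dx.
\end{equation*}
Chaining the two inequalities yields \eqref{Functional_2} on the dense subclass $C_0^1(\Omega)$, and a standard density argument in the $J_p$-norm (using continuity of $u \mapsto \int |\nabla_X u|^p$ and $u \mapsto \int |u|^p$ in $\mathring{S}^{1,p}(\Omega)$) extends it to all $u \in \mathring{S}^{1,p}(\Omega)$.

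The step most likely to need care is the admissibility of $\phi = |u|^p/v^{p-1}$ as a test function: one has to know $v$ is sufficiently regular to differentiate and that $v$ is bounded below on $\operatorname{supp}(u)$. Restricting first to $u \in C_0^1(\Omega)$ removes the boundary-behaviour issue, and the strict positivity hypothesis on $v$ takes care of the division by $v^{p-1}$, so the obstacle is essentially handled by the setup of the lemma itself.
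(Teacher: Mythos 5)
Your overall strategy is exactly the paper's: test the sup-solution inequality \eqref{inq_sup} with $\phi=|u|^p/v^{p-1}$, use the Picone identity of Lemma \ref{lem_Picone} in the form $R(u,v)\geq 0$ to bound the resulting term by $\int_\Omega|\nabla_X u|^p\,dx$, and finish by density. The one place where you genuinely diverge from the paper is how you make the division by $v$ legitimate, and that is where there is a gap. You claim that, for $u\in C_0^1(\Omega)$, the strictly positive sup-solution $v$ is bounded below by some $\delta>0$ on the compact set $\operatorname{supp}(u)$. That conclusion requires $v$ to be continuous (or at least lower semicontinuous after modification on a null set); the hypothesis of the lemma only gives $v\in\mathring{S}^{1,p}(\Omega)$ with $v>0$ pointwise, and a merely measurable positive function need not be bounded away from zero on a compact set. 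Without that lower bound, $\phi=|u|^p/v^{p-1}$ need not lie in $\mathring{S}^{1,p}(\Omega)$ (its horizontal gradient involves $|u|^p v^{-p}\nabla_X v$, which you control only via $v\geq\delta$), so the admissibility of the test function --- which you yourself identify as the delicate step --- is not actually established. One could try to rescue this by invoking a weak Harnack inequality for sup-solutions (as in \cite{CDG1993}) to get local lower bounds, but you do not do so, and nothing in the statement of the lemma hands it to you.

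The paper avoids this entirely by testing with $\phi=|u|^p/(v+\varepsilon)^{p-1}$ for $\varepsilon>0$, which is bounded and has integrable horizontal gradient with no regularity or lower-bound assumptions on $v$ beyond $v\geq 0$; one then applies the Picone identity to the pair $(u,v+\varepsilon)$ and passes to the limit $\varepsilon\to 0^+$ using Fatou's lemma on the term $\lambda\int_\Omega v^{p-1}|u|^p(v+\varepsilon)^{1-p}\,dx$ and dominated convergence on the other side. Replacing your pointwise lower bound on $v$ by this $\varepsilon$-regularization is the standard fix and turns your argument into the paper's proof essentially verbatim; everything else in your write-up (the chain-rule computation, the use of $R(u,v)\geq 0$, the density step) is correct and matches the paper.
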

We recall that a sup-solution $v$ of \eqref{Dirichlet_BV} has the following form
\begin{equation*}
\int_{\Omega}  |\nabla_X v|^{p-2} \langle \nabla_{X}v, \nabla_{X} \phi \rangle dx - \lambda \int_{\Omega}|v|^{p-2}v \phi dx \geq 0,
\end{equation*}
for all $\phi \in \mathring{S}^{1,p}(\Omega)$.
\begin{proof}[Proof of Lemma \ref{lem_J}]
	Suppose that $v$ is a strictly positive sup-solution of \eqref{Dirichlet_BV} in $\Omega$ and assume that $u \in C_0^{\infty}(\Omega)$. Then for a given small $\varepsilon>0$ in \eqref{inq_sup} we set 
	\begin{equation*}
		\phi := \frac{|u|^p}{(v+\varepsilon)^{p-1}}.
	\end{equation*}
	A straightforward computation gives that
	\begin{align*}
		\int_{\Omega} \lambda v^{p-1}\frac{|u|^p}{(v+\varepsilon)^{p-1}} dx &\leq \int_{\Omega}   |\nabla_Xv|^{p-2} \langle \nabla_{X} v, \nabla_{X} \left( \frac{|u|^p}{(v+\varepsilon)^{p-1}}  \right)\rangle  dx \\
		 =& \int_{\Omega} |\nabla_Xu|^p dx - \int_{\Omega} |\nabla_Xu|^p -  |\nabla_Xv|^{p-2} \langle \nabla_{X} v, \nabla_{X}  \left( \frac{|u|^p}{(v+\varepsilon)^{p-1}}  \right)\rangle  dx \\
	 = &\int_{\Omega} |\nabla_Xu|^p dx - \int_{\Omega} L(u,v+\varepsilon)dx. 
	\end{align*}
	In the last line, we have used the Picone identity. Now by taking the limit as $\varepsilon\rightarrow 0^+$ and making use of Fatou's lemma on the left-hand side and the Lebesgue dominated convergence theorem on the right-hand side of above expression, we arrive at
	 \begin{equation}
	 	\int_{\Omega} |\nabla_Xu|^p dx - \lambda \int_{\Omega} |u|^p dx - \int_{\Omega} L(u,v) dx \geq 0.
	 \end{equation}
	 Since $L(u,v)\geq 0$ a.e. in $\Omega$, it yields 
	 \begin{equation*}
	 	\int_{\Omega} |\nabla_Xu|^p dx - \lambda \int_{\Omega} |u|^p dx  \geq 0.
	 \end{equation*}
	 By a density argument, we assert that the claim is valid for $\mathring{S}^{1,p}(\Omega)$. 
\end{proof}

\begin{thm}\label{thm_eigenvalue}
	Let $\Omega\subset M$ be a bounded open set. If there exists $\lambda$ and a strictly positive $v \in \mathring{S}^{1,p}(\Omega)$ such that 
	\begin{equation}\label{eq_weak}
		\int_{\Omega} |\nabla_X v |^{p-2} \langle \nabla_{X} v, \nabla_{X} \phi \rangle dx \geq \lambda \int_{\Omega} |v|^{p-2}v \phi dx,  
	\end{equation}
	for every nonnegative function $\phi \in \mathring{S}^{1,p}(\Omega)$, then we have
	\begin{equation}\label{inq_eigenvalue}
		\lambda_1(\Omega) \geq \lambda,
	\end{equation}
	and 
	\begin{equation}\label{inq_J1}
		\int_{\Omega} |\nabla_X u|^p dx \geq \lambda \int_{\Omega} |u|^p dx,
	\end{equation}
	for all $u \in \mathring{S}^{1,p}(\Omega)$.
\end{thm}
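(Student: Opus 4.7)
The plan is to follow essentially the same Picone-based argument used in the proof of Lemma~\ref{lem_J}, since the hypothesis in Theorem~\ref{thm_eigenvalue} is exactly a ``nonnegatively tested'' sup-solution condition, which matches the test function that will appear naturally. Once the Rayleigh-type inequality \eqref{inq_J1} is established for all $u \in \mathring{S}^{1,p}(\Omega)$, the bound \eqref{inq_eigenvalue} on $\lambda_1(\Omega)$ is immediate from the variational characterisation \eqref{lambda_1}.

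First I would take $u \in C_0^{\infty}(\Omega)$ and, for a small parameter $\varepsilon > 0$, substitute the nonnegative test function
\begin{equation*}
\phi := \frac{|u|^p}{(v+\varepsilon)^{p-1}} \in \mathring{S}^{1,p}(\Omega)
\end{equation*}
into the hypothesis \eqref{eq_weak}. This is admissible because $u$ has compact support and $v+\varepsilon \geq \varepsilon > 0$, so $\phi$ is bounded with bounded horizontal gradient. Computing $\nabla_X \phi$ and combining with the Picone identity $L(u,v+\varepsilon) = R(u,v+\varepsilon) \geq 0$ from Lemma~\ref{lem_Picone}, I get
\begin{equation*}
\int_{\Omega} |\nabla_X v|^{p-2} \left\langle \nabla_X v, \nabla_X \left( \frac{|u|^p}{(v+\varepsilon)^{p-1}} \right) \right\rangle dx = \int_{\Omega} |\nabla_X u|^p dx - \int_{\Omega} L(u, v+\varepsilon) dx.
\end{equation*}
Together with \eqref{eq_weak} applied to $\phi$, this gives
\begin{equation*}
\lambda \int_{\Omega} \frac{v^{p-1}|u|^p}{(v+\varepsilon)^{p-1}} dx \leq \int_{\Omega} |\nabla_X u|^p dx - \int_{\Omega} L(u, v+\varepsilon) dx.
\end{equation*}

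Next I would pass to the limit as $\varepsilon \to 0^+$: Fatou's lemma on the left-hand side (the integrand converges pointwise to $|u|^p$ and is nonnegative) and the Lebesgue dominated convergence theorem on the right-hand side (with dominant determined by $u \in C_0^\infty$ and the fact that $v > 0$) yield
\begin{equation*}
\int_{\Omega} |\nabla_X u|^p dx - \lambda \int_{\Omega} |u|^p dx \geq \int_{\Omega} L(u,v) dx \geq 0,
\end{equation*}
which is \eqref{inq_J1} for test functions. The extension to arbitrary $u \in \mathring{S}^{1,p}(\Omega)$ is by density, exploiting continuity of both sides in the $J_p$-norm. Finally, restricting to $u$ with $\|u\|_{L^p(\Omega)} = 1$ and taking the infimum yields \eqref{inq_eigenvalue}.

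The main obstacle I expect is not conceptual but technical: namely, carefully justifying the dominated convergence on the right-hand side (verifying that the integrand can be bounded independently of $\varepsilon$ by an $L^1$ function, using $v > 0$ and $u \in C_0^\infty(\Omega)$), and the density argument that upgrades the inequality from $C_0^\infty(\Omega)$ to all of $\mathring{S}^{1,p}(\Omega)$ without losing the nonnegativity of the relevant terms. Both issues already appear in the proof of Lemma~\ref{lem_J} and should be handled in the same manner.
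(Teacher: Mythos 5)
Your proposal is correct, and it splits into two halves that compare differently with the paper. For \eqref{inq_J1} you reproduce essentially verbatim the paper's argument, which is simply a citation of Lemma~\ref{lem_J}: the test function $\phi=|u|^p/(v+\varepsilon)^{p-1}$, the Picone identity $L=R\geq 0$, Fatou plus dominated convergence as $\varepsilon\to 0^+$, and a density step — this is exactly what the paper does, and your observation that the hypothesis \eqref{eq_weak} only needs to be tested against nonnegative $\phi$ is accurate and harmless. For \eqref{inq_eigenvalue}, however, you take a genuinely different and in fact cleaner route: you deduce it immediately from \eqref{inq_J1} via the variational characterisation \eqref{lambda_1}, whereas the paper runs a second Picone computation, integrating $L(u_1,v+\varepsilon)$ against the assumed first eigenfunction $u_1$ and invoking the standing hypothesis \eqref{assumption}. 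Your derivation has the advantage of not requiring the existence of a minimiser $u_1$ at all (it works even if $\lambda_1(\Omega)$ is merely an infimum), while the paper's version of the argument is the template it then reuses for Corollary~\ref{uniq} and Theorems~\ref{simplicity} and~\ref{thm_mono}, which is presumably why it is presented that way. The only caveats in your write-up are the ones you already flag — the uniform $L^1$ domination justifying the limit in $\varepsilon$ and the density upgrade to $\mathring{S}^{1,p}(\Omega)$ — and these are present, equally unelaborated, in the paper's own proof of Lemma~\ref{lem_J}.
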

Note that here and after we denote by $\lambda_1$ the principal frequency of $\mathcal{L}_p$ and by $u_1$ the corresponding positive eigenfunction without the existence argument, that is, we assume that 
 \begin{equation}\label{assumption}
  \int_{\Omega} |\nabla_X u_1 |^{p-2} \langle \nabla_{X} u_1, \nabla_{X} \phi\rangle dx = \lambda_1(\Omega) \int_{\Omega} |u_1|^{p-2}u_1 \phi dx, 
 \end{equation}
 for every $\phi \in \mathring{S}^{1,p}(\Omega)$. 
 
\begin{proof}[Proof of Theorem \ref{thm_eigenvalue}]
	Note that inequality \eqref{inq_J1} was proved in Lemma \ref{lem_J}. So now we need to prove \eqref{inq_eigenvalue}. 
	
	Let $u_1 \in \mathring{S}^{1,p}(\Omega)$ be the eigenfunction associated to the principal frequency $\lambda_1(\Omega)$. By choosing  $\varepsilon >0$, we integrate $L(u_1,v+\varepsilon)$ over $\Omega$, then using  \eqref{eq_weak} and \eqref{assumption} with $\phi=u_1$, we have
	\begin{align*}
		0 \leq& \int_{\Omega} L(u_1,v+\varepsilon) dx \\
		 =&\int_{\Omega} |\nabla_Xu_1|^pdx  - \int_{\Omega}|\nabla_Xv|^{p-2} \langle \nabla_{X} v, \nabla_{X} \left( \frac{|u_1|^p}{(v+\varepsilon)^{p-1}}\right) \rangle dx \\
		 \leq & \,\lambda_1(\Omega) \int_{\Omega} |u_1|^p dx - \lambda \int_{\Omega} v^{p-1} \frac{|u_1|^p}{(v+\varepsilon)^{p-1}} dx.
	\end{align*}
	Now we take taking the limit as $\varepsilon \rightarrow 0^+$, we arrive at 
	\begin{equation*}
		0 \leq \lambda_1(\Omega) - \lambda,
	\end{equation*}
	which proves Theorem \ref{thm_eigenvalue}.
\end{proof}
\begin{cor}[Uniqueness]\label{uniq}
	 If there exists $\lambda$ and a strictly positive $v \in \mathring{S}^{1,p}(\Omega)$ such that 
	\begin{equation}\label{eq=}
	\int_{\Omega} |\nabla_X v |^{p-2} \langle \nabla_{X} v, \nabla_{X} \phi\rangle dx = \lambda \int_{\Omega} |v|^{p-2}v \phi dx,  
	\end{equation}
	for every nonnegative function $\phi \in \mathring{S}^{1,p}(\Omega)$, then we have 
	\begin{equation}
		\lambda_1(\Omega)=\lambda.
	\end{equation}
\end{cor}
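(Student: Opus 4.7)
The plan is to establish the two one-sided inequalities $\lambda_1(\Omega)\geq\lambda$ and $\lambda_1(\Omega)\leq\lambda$ and combine them.

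For the first inequality, I would simply invoke Theorem \ref{thm_eigenvalue}. The equality \eqref{eq=} trivially implies the one-sided inequality \eqref{eq_weak} for every nonnegative test function $\phi\in\mathring{S}^{1,p}(\Omega)$. Thus $v$ satisfies the hypothesis of Theorem \ref{thm_eigenvalue}, and we immediately obtain
\[
\lambda_1(\Omega)\geq \lambda.
\]

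For the reverse inequality, the natural move is to substitute $\phi=v$ itself into \eqref{eq=}. This is legitimate because $v\in\mathring{S}^{1,p}(\Omega)$ and $v>0$, so $v$ is an admissible nonnegative test function. The substitution collapses to
\[
\int_\Omega |\nabla_X v|^p \, dx = \lambda \int_\Omega |v|^p \, dx,
\]
so the Rayleigh quotient of $v$ equals exactly $\lambda$. By the variational definition \eqref{lambda_1} of $\lambda_1(\Omega)$ as the infimum of the Rayleigh quotient over nontrivial functions in $\mathring{S}^{1,p}(\Omega)$, this gives $\lambda_1(\Omega)\leq \lambda$. Combining the two bounds yields $\lambda_1(\Omega)=\lambda$.

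There is no real obstacle here: the corollary is essentially a book-keeping consequence of Theorem \ref{thm_eigenvalue} together with the observation that any strictly positive $v\in\mathring{S}^{1,p}(\Omega)$ satisfying the eigenvalue equation in the weak sense automatically attains the value $\lambda$ in the Rayleigh quotient. The only mildly delicate point is the admissibility of the test function $\phi=v$, which is guaranteed by the regularity and positivity hypotheses placed on $v$ in the statement. Should one prefer to avoid testing with $v$ directly, one could argue via a density argument approximating $v$ by functions in $C_0^1(\Omega)$ in the $J_p$-norm, but this detour is not needed since the weak formulation \eqref{eq=} is already posited for every nonnegative $\phi\in\mathring{S}^{1,p}(\Omega)$.
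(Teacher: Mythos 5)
Your proof is correct, but the second half differs genuinely from the paper's argument. For the bound $\lambda_1(\Omega)\geq\lambda$ you both invoke Theorem \ref{thm_eigenvalue}, since \eqref{eq=} trivially implies \eqref{eq_weak}. For the reverse bound, however, the paper does not test \eqref{eq=} with $\phi=v$ and appeal to the variational definition \eqref{lambda_1}; instead it runs the Picone machinery once more, integrating $L(v,u_1+\varepsilon)$ over $\Omega$, using the eigenfunction equation \eqref{assumption} for $u_1$ together with \eqref{eq=}, and letting $\varepsilon\to 0^+$ to conclude $\lambda_1(\Omega)\leq\lambda$. Your route is shorter and arguably cleaner: it reads off $\int_\Omega|\nabla_X v|^p\,dx=\lambda\int_\Omega|v|^p\,dx$ directly, so $v$ is an admissible competitor in \eqref{lambda_1} with Rayleigh quotient exactly $\lambda$, and it does not need the (assumed) existence of the first eigenfunction $u_1$ for this half of the argument, nor the $\varepsilon$-regularization and limit passage. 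What the paper's approach buys is uniformity of method --- the same Picone-based computation is reused across Theorem \ref{thm_eigenvalue}, Corollary \ref{uniq}, Theorem \ref{simplicity} and Theorem \ref{thm_mono} --- but for this particular corollary your observation suffices. The only point worth flagging, at the same level of rigor as the paper itself, is that testing with $\phi=v$ and forming the Rayleigh quotient presupposes $0<\int_\Omega|v|^p\,dx<\infty$, which you correctly note is guaranteed by the hypotheses on $v$ within the paper's functional framework.
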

\begin{proof}[Proof of Corollary \ref{uniq}]
	Let $u_1 \in \mathring{S}^{1,p}(\Omega)$ be the eigenfunction associated to the principal frequency $\lambda_1(\Omega)$. By choosing  $\varepsilon >0$, we integrate $L(u_1,v+\varepsilon)$ over $\Omega$, then using \eqref{assumption} and \eqref{eq=} with $\phi=v$, we have
	\begin{align*}
	0 \leq& \int_{\Omega} L(v,u_1+\varepsilon) dx \\
	=&\int_{\Omega} |\nabla_Xv|^pdx  - \int_{\Omega}|\nabla_Xu_1|^{p-2} \langle \nabla_{X} u_1, \nabla_{X} \left( \frac{|v|^p}{(u_1+\varepsilon)^{p-1}}\right)\rangle dx \\
	=& \lambda \int_{\Omega} |v|^p dx - \lambda_1(\Omega) \int_{\Omega} u_1^{p-1} \frac{|v|^p}{(u_1+\varepsilon)^{p-1}} dx.
	\end{align*}
	Now we take the limit as $\varepsilon \rightarrow 0^+$, so we get 
	$	\lambda_1(\Omega) \leq \lambda$. On the other hand, by Theorem \ref{thm_eigenvalue} we have the fact that $\lambda_1(\Omega) \geq \lambda$, that is, we arrive at 
	\begin{equation*}
	\lambda_1(\Omega) = \lambda,
	\end{equation*}
	completing the proof.
\end{proof}

\begin{thm}[Simplicity]\label{simplicity}
Any eigenfunction $u \in \mathring{S}^{1,p}(\Omega)$ of $\mathcal{L}_p$ associated to the principal frequency $\lambda_1(\Omega)$ is a constant multiple of $u_1$.
\end{thm}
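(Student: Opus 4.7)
The plan is to follow the Picone-identity strategy employed in the proof of Corollary \ref{uniq}, now applied to the given first eigenfunction $u$ together with the reference eigenfunction $u_1$. The crucial observation is that \emph{both} $u$ and $u_1$ satisfy the weak eigenvalue equation \eqref{assumption} with the same eigenvalue $\lambda_1(\Omega)$; this will let the two Rayleigh-quotient terms produced by Picone's identity cancel, and force the nonnegative residual $L(u,u_1)$ to vanish identically.

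First I would reduce to the case $u>0$ in $\Omega$. Since $|\nabla_X|u|| = |\nabla_X u|$ a.e., replacing $u$ by $|u|$ preserves the Rayleigh quotient, so $|u|$ is also a first eigenfunction; a strong minimum principle for weak solutions of the equation in \eqref{Dirichlet_BV} then gives $|u|>0$ throughout $\Omega$, so we may assume $u>0$. For $\varepsilon>0$, I would integrate Picone's identity $L(u,u_1+\varepsilon)\geq 0$ from Lemma \ref{lem_Picone} to obtain
\begin{equation*}
0\leq \int_\Omega L(u,u_1+\varepsilon)\,dx = \int_\Omega |\nabla_X u|^p\,dx - \int_\Omega |\nabla_X u_1|^{p-2}\bigl\langle \nabla_X u_1, \nabla_X\bigl(u^p/(u_1+\varepsilon)^{p-1}\bigr)\bigr\rangle dx.
\end{equation*}
Testing \eqref{assumption} with the admissible function $\phi = u^p/(u_1+\varepsilon)^{p-1}$ in the second integral, and using the analogous weak equation satisfied by $u$ with test function $\phi=u$ in the first (which yields $\int_\Omega |\nabla_X u|^p\,dx = \lambda_1(\Omega)\int_\Omega u^p\,dx$), the inequality collapses to
\begin{equation*}
0\leq \lambda_1(\Omega)\int_\Omega u^p\,dx - \lambda_1(\Omega)\int_\Omega \frac{u_1^{p-1} u^p}{(u_1+\varepsilon)^{p-1}}\,dx.
\end{equation*}

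Passing to the limit $\varepsilon\to 0^+$ via Lebesgue's dominated convergence theorem on the right-hand side, the two terms cancel, so $\int_\Omega L(u,u_1+\varepsilon)\,dx\to 0$. Combined with Fatou's lemma and the pointwise nonnegativity $L(u,u_1)\geq 0$ from Lemma \ref{lem_Picone}, this forces $L(u,u_1)=0$ a.e.\ in $\Omega$, and the equality case of Lemma \ref{lem_Picone} then yields $u = c\, u_1$ a.e.\ for some constant $c>0$, which is exactly the desired conclusion. The main technical obstacle is the initial reduction to the strictly positive case, which hinges on a strong minimum principle for the $p$-sub-Laplacian in the general vector field setting, together with the verification that $\phi = u^p/(u_1+\varepsilon)^{p-1}$ is a legitimate element of $\mathring{S}^{1,p}(\Omega)$; the latter is a standard truncation/approximation argument once $u_1+\varepsilon$ is bounded away from zero on compacta, using boundedness of $\Omega$.
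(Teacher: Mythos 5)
Your argument is essentially identical to the paper's proof: both integrate the Picone expression $L(u,u_1+\varepsilon)$, use the weak eigenvalue equations for $u_1$ (tested against $|u|^p/(u_1+\varepsilon)^{p-1}$) and for $u$ (tested against $u$) to show the integral tends to $0$ as $\varepsilon\to 0^+$, and then invoke nonnegativity together with the equality case of Lemma \ref{lem_Picone} to conclude $u=cu_1$. Your preliminary reduction to $u>0$ via $|\nabla_X|u||=|\nabla_X u|$ and a minimum principle is an extra care step the paper silently omits (its Picone lemma is stated with $|u|^p$, so strictly it yields $|u|=cu_1$), but it does not change the route.
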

\begin{proof}[Proof of Theorem \ref{simplicity}]
	Let both $u$ and $u_1$ be eigenfunctions associated to the principal frequency $\lambda_1$. Let $L(u,u_1)\geq 0$ in $\Omega$. For $\varepsilon >0$, we integrate $L(u,u_1+\varepsilon)$ over $\Omega$ with $v=u_1+\varepsilon$, we get
	\begin{align*}
		\int_{\Omega} L(u,u_1+\varepsilon) dx = \int_{\Omega} |\nabla_X u|^p dx - \lambda_1(\Omega) \int_{\Omega}u_1^{p-1}\frac{|u|^p}{(u_1+\varepsilon)^{p-1}}dx.
	\end{align*}
	Taking the limit as $\varepsilon \rightarrow 0^+$ and using the Lebesgue dominated convergence theorem and Fatou's lemma, we arrive at
	\begin{equation*}
			\int_{\Omega} L(u,u_1) dx \leq \int_{\Omega} |\nabla_Xu|^p dx - \lambda_1(\Omega)\int_{\Omega}|u|^p dx =0.
	\end{equation*}
	Since $ L(u,u_1)\geq$ in $\Omega$, it follows  that $L(u,u_1)=0$ a.e. in $\Omega$. So it follows from Lemma \ref{lem_Picone} that $u=Cu_1$ with $C>0$.
\end{proof}

Then, by using the Picone identity, we show monotonicity of the principal frequency  $\lambda_1(\Omega)$ in Theorem \ref{thm_eigenvalue} as the function of the set $\Omega$. 
\begin{thm}\label{thm_mono}
	Let $\Omega$ and $\widetilde{\Omega}$ be subsets of $M$. If $\Omega \subset \widetilde{\Omega}$, then we have
	\begin{equation}
		\lambda_1(\Omega) \geq \lambda_1(\widetilde{\Omega}).
	\end{equation}
\end{thm}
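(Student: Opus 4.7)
The natural approach is the variational one: use the definition of $\lambda_1(\Omega)$ as the minimum of the Rayleigh quotient $\int_\Omega |\nabla_X u|^p\,dx / \int_\Omega |u|^p\,dx$ over nontrivial $u \in \mathring{S}^{1,p}(\Omega)$, and exploit the fact that the inclusion $\Omega \subset \widetilde{\Omega}$ induces, by extension by zero, an inclusion of admissible function spaces $\mathring{S}^{1,p}(\Omega) \hookrightarrow \mathring{S}^{1,p}(\widetilde{\Omega})$. Minimising the same functional over a larger class of competitors can only decrease the infimum, so $\lambda_1(\widetilde{\Omega}) \leq \lambda_1(\Omega)$.

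Concretely, the plan is as follows. First, observe that any $\varphi \in C_0^1(\Omega)$ extends by zero to a function in $C_0^1(\widetilde{\Omega})$: its support is a compact subset of $\Omega$, hence a compact subset of $\widetilde{\Omega}$, and outside this support the extension is identically zero, so it remains $C^1$. This gives $C_0^1(\Omega) \subset C_0^1(\widetilde{\Omega})$ as a subset of functions on $\widetilde{\Omega}$. Second, since the vector fields $X_k$ and the volume form $dx$ are intrinsic to $M$, the seminorm $J_p$ is preserved under zero extension, that is, $J_p^\Omega(\varphi) = J_p^{\widetilde{\Omega}}(\varphi)$ for every $\varphi \in C_0^1(\Omega)$. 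Passing to the completion in the $J_p$ norm, we conclude that $\mathring{S}^{1,p}(\Omega) \subset \mathring{S}^{1,p}(\widetilde{\Omega})$ (again, via extension by zero), and both the numerator and denominator of the Rayleigh quotient are unchanged.

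Third, given any nontrivial $u \in \mathring{S}^{1,p}(\Omega)$ with $\int_\Omega |u|^p\,dx = 1$, its zero extension $\tilde u \in \mathring{S}^{1,p}(\widetilde{\Omega})$ satisfies $\int_{\widetilde{\Omega}} |\tilde u|^p\,dx = 1$ and $\int_{\widetilde{\Omega}} |\nabla_X \tilde u|^p\,dx = \int_\Omega |\nabla_X u|^p\,dx$. By the variational characterisation \eqref{lambda_1} applied on $\widetilde{\Omega}$,
\begin{equation*}
\lambda_1(\widetilde{\Omega}) \leq \int_{\widetilde{\Omega}} |\nabla_X \tilde u|^p\,dx = \int_\Omega |\nabla_X u|^p\,dx.
\end{equation*}
Taking the infimum over all such $u$ yields $\lambda_1(\widetilde{\Omega}) \leq \lambda_1(\Omega)$, which is the claim.

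The only point requiring genuine care is the zero-extension step — verifying that the completion of $C_0^1(\Omega)$ in the $J_p$ norm embeds into the completion of $C_0^1(\widetilde{\Omega})$ — but this is standard once one notes that $J_p$ is defined pointwise via the vector fields $X_k$ on $M$ and is therefore invariant under the extension. An alternative route, more in the spirit of the paper, would be to apply Theorem \ref{thm_eigenvalue} with $v$ equal to the positive principal eigenfunction on $\widetilde{\Omega}$, which satisfies \eqref{eq_weak} with $\lambda = \lambda_1(\widetilde{\Omega})$ for all nonnegative test functions $\phi \in \mathring{S}^{1,p}(\Omega) \subset \mathring{S}^{1,p}(\widetilde{\Omega})$; however this requires additional care concerning the space to which the restriction of $v$ to $\Omega$ belongs, so the direct variational argument above is cleaner.
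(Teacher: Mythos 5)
Your proof is correct, but it is genuinely different from the one in the paper. You argue purely variationally: extension by zero gives an isometric (for $J_p$, and also for the $L^p$ norm) inclusion $\mathring{S}^{1,p}(\Omega)\hookrightarrow\mathring{S}^{1,p}(\widetilde{\Omega})$, so the minimum in \eqref{lambda_1} is taken over a larger class on $\widetilde{\Omega}$ and can only decrease. The paper instead stays within its Picone-identity framework: it takes the positive eigenfunctions $u_1$ on $\Omega$ and $\widetilde{u}_1$ on $\widetilde{\Omega}$ (whose existence is the standing assumption \eqref{assumption}), integrates $L(u_1,\widetilde{u}_1+\varepsilon)$ over $\Omega$, uses the weak formulations on both domains, and lets $\varepsilon\to 0^+$ to get $(\lambda_1(\Omega)-\lambda_1(\widetilde{\Omega}))\int_{\Omega}u_1^p\,dx\geq\int_{\Omega}L(u_1,\widetilde{u}_1)\,dx\geq 0$. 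Your route is more elementary and does not require the existence of minimisers on either domain, only the definition of $\lambda_1$ as an infimum; it is the standard domain-monotonicity argument. The paper's route keeps all results as consequences of Lemma \ref{lem_Picone} and, as a by-product, yields the quantitative lower bound $\int_{\Omega}L(u_1,\widetilde{u}_1)\,dx$ for the gap, which in principle characterises the equality case ($L(u_1,\widetilde{u}_1)=0$ forces $u_1=c\,\widetilde{u}_1$ on $\Omega$), though the paper does not exploit this. The one point you should make explicit, which you already flag, is that the denominator $\int_{\Omega}|u|^p\,dx$ must be well defined and preserved under the embedding of completions; this is implicit in the paper's setup of $\mathring{S}^{1,p}$ and is the same issue the paper itself glosses over.
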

\begin{proof}[Proof of Theorem \ref{thm_mono}]
	We have $\Omega \subset \widetilde{\Omega}$, and let $u_1$ and $\widetilde{u}_{1}$ be positive eigenfunctions associated to $\lambda_1(\Omega)$ and $\lambda_1(\widetilde{\Omega})$, respectively (see the assumption \eqref{assumption}). For $\varepsilon>0$ we consider $L(u_1,\widetilde{u}_{1}+\varepsilon)$ where $u=u_1$ and $v=\widetilde{u}_{1}+\varepsilon$. By using inequality \eqref{eq_weak}, we get
	\begin{align*}
		\int_{\Omega} L(u_1,\widetilde{u}_{1}+\varepsilon) dx &= \int_{\Omega}|\nabla_Xu_1|^p dx - \int_{\Omega}  |\nabla_X \widetilde{u}_{1}|^{p-2} \langle\nabla_{X} \widetilde{u}_{1}, \nabla_{X}\left( \frac{u_1^p}{(\widetilde{u}_{1}+\varepsilon)^{p-1}}\right) \rangle dx \\
		& \leq \lambda_1(\Omega)\int_{\Omega} u_1^p dx - \lambda_1(\widetilde{\Omega})\int_{\Omega} \widetilde{u}_{1}^{p-2}\frac{u_1^p}{(\widetilde{u}_{1}+\varepsilon)^{p-1}} dx.
	\end{align*}
	Now by taking the limit as $\varepsilon \rightarrow 0^+$ and making use of Fatou's lemma and the Lebesgue dominated convergence theorem, we obtain
	\begin{equation}
(\lambda_1(\Omega)-\lambda_1(\widetilde{\Omega})) \int_{\Omega} u_1^p dx \geq 	\int_{\Omega} L(u_1,\widetilde{u}_{1}) dx \geq 0,
	\end{equation}
	which yields 
	\begin{equation}
		\lambda_1(\Omega) \geq \lambda_1(\widetilde{\Omega}).
	\end{equation}
	This proves Theorem \ref{thm_mono}.
\end{proof}
\section{Caccioppoli inequalities for general vector fields}\label{Sec4}
Here we give the Caccioppoli inequalities for general vector fields by making use of Picone's identity, and we discuss some of their corollaries on the Grushin plane and the Heisenberg group.
\begin{thm}[Caccioppoli inequality]\label{thm_Cacci}
Let $v$ be a positive sub-solution of \eqref{Dirichlet_BV} in $\Omega \subset M$. Then for every fixed $q>p-1$, $1<p<\infty$, and $\lambda \in \mathbb{R}$ we have 
	 \begin{equation}\label{Caccioppoli}
	 \int_{\Omega}v^{q-p}\phi^p|\nabla_X v|^pdx \leq \left( \frac{p}{q-p+1} \right)^p \int_{\Omega} v^q |\nabla_X \phi|^p dx + \frac{\lambda p}{q-p+1}\int_{\Omega}v^{q}\phi^p dx,
	 \end{equation} 
	 for all nonnegative functions $\phi \in C^{\infty}_0(\Omega)$.
\end{thm}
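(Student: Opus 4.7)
The plan is to test the sub-solution inequality \eqref{inq_sub} against $\psi := v^{q-p+1}\phi^p$ and then absorb the resulting cross-term by the Cauchy--Schwarz and Young inequalities, with a parameter chosen so that the constants land exactly on the right-hand side of \eqref{Caccioppoli}. Since $q-p+1>0$, $v>0$, and $\phi\geq 0$, the function $\psi$ is nonnegative, compactly supported in $\Omega$, and (modulo the approximation discussed below) belongs to $\mathring{S}^{1,p}(\Omega)$. Using
\begin{equation*}
\nabla_X\psi = (q-p+1)\,v^{q-p}\phi^p\,\nabla_X v + p\,v^{q-p+1}\phi^{p-1}\,\nabla_X\phi,
\end{equation*}
and the observation $v^{p-1}\psi = v^q\phi^p$, the sub-solution inequality becomes
\begin{equation*}
(q-p+1)\int_\Omega v^{q-p}\phi^p|\nabla_X v|^p\,dx + p\int_\Omega v^{q-p+1}\phi^{p-1}|\nabla_X v|^{p-2}\langle\nabla_X v,\nabla_X\phi\rangle\,dx \leq \lambda\int_\Omega v^q\phi^p\,dx.
\end{equation*}

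Next I would control the cross-term. The Cauchy--Schwarz bound $|\langle\nabla_X v,\nabla_X\phi\rangle|\leq |\nabla_X v||\nabla_X\phi|$ reduces the task to estimating $\int v^{q-p+1}\phi^{p-1}|\nabla_X v|^{p-1}|\nabla_X\phi|\,dx$. Splitting exponents as
\begin{equation*}
v^{q-p+1}\phi^{p-1}|\nabla_X v|^{p-1}|\nabla_X\phi| = \bigl(v^{(q-p)/p}\phi\,|\nabla_X v|\bigr)^{p-1}\cdot v^{q/p}|\nabla_X\phi|
\end{equation*}
and applying Young's inequality with conjugate exponents $p/(p-1)$, $p$ and weight $\varepsilon>0$ yields, after absorbing the gradient term into the left-hand side,
\begin{equation*}
\bigl(q-p+1-(p-1)\varepsilon^{p/(p-1)}\bigr)\int_\Omega v^{q-p}\phi^p|\nabla_X v|^p\,dx \leq \lambda\int_\Omega v^q\phi^p\,dx + \varepsilon^{-p}\int_\Omega v^q|\nabla_X\phi|^p\,dx.
\end{equation*}
The decisive step is the choice $\varepsilon^{p/(p-1)} = (q-p+1)/p$, which makes the bracket on the left equal to $(q-p+1)/p$ and simultaneously produces $\varepsilon^{-p} = \bigl((q-p+1)/p\bigr)^{-(p-1)}$; dividing by $(q-p+1)/p$ gives exactly the constants $(p/(q-p+1))^p$ and $\lambda p/(q-p+1)$ appearing in \eqref{Caccioppoli}.

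The main technical obstacle is justifying that $\psi = v^{q-p+1}\phi^p$ is an admissible test function in $\mathring{S}^{1,p}(\Omega)$: when $0<q-p+1<1$ the map $t\mapsto t^{q-p+1}$ fails to be Lipschitz at $t=0$, and when $q$ is large $v^{q-p+1}$ may be unbounded on $\mathrm{supp}\,\phi$ despite $\phi\in C_0^\infty(\Omega)$. I would handle this in the spirit of the proof of Lemma \ref{lem_J}: first replace $v$ by the truncation $v_N := \min\{v,N\}$ shifted by $\delta>0$, carry out the computation with the admissible test function $(v_N+\delta)^{q-p+1}\phi^p$, and then let $\delta\to 0^+$ and $N\to\infty$ using Fatou's lemma on the gradient term and the monotone/dominated convergence theorem on the remaining integrals. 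Once this passage to the limit is secured, the chain of inequalities above delivers \eqref{Caccioppoli}.
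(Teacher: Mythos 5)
Your argument is correct and lands exactly on the constants in \eqref{Caccioppoli}, but it is organised differently from the paper's proof, and the difference is worth recording. The paper starts from the Picone inequality $0\le\int_\Omega L(v^{q/p}\phi,v)\,dx$, expands the cross term via $\nabla_X(v^{q/p}\phi)=\tfrac{q}{p}v^{(q-p)/p}\phi\,\nabla_Xv+v^{q/p}\nabla_X\phi$, applies Young's inequality with a parameter $s$, and then replaces $\int_\Omega|\nabla_X(v^{q/p}\phi)|^p\,dx$ by $\lambda\int_\Omega|v^{q/p}\phi|^p\,dx$, finally choosing $s=(q-p+1)/p$. You instead test the weak sub-solution inequality \eqref{inq_sub} directly with $\psi=v^{q-p+1}\phi^p$. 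These are algebraically the same manipulations: your $\psi$ is precisely $|u|^p/v^{p-1}$ with $u=v^{q/p}\phi$, i.e.\ the function occurring in $R(u,v)$, and your weight $\varepsilon^{p/(p-1)}=(q-p+1)/p$ is the paper's $s$. However, your route is the more defensible one. The paper's justification of the step $\int_\Omega|\nabla_X u|^p\,dx\le\lambda\int_\Omega|u|^p\,dx$ ``for the sub-solution'' is applied to $u=v^{q/p}\phi$, which is not itself a sub-solution; what the sub-solution hypothesis actually yields is $\int_\Omega|\nabla_Xv|^{p-2}\langle\nabla_Xv,\nabla_X\psi\rangle\,dx\le\lambda\int_\Omega v^{p-1}\psi\,dx$, which is exactly the inequality you use, and in your version the problematic term $\int_\Omega|\nabla_X(v^{q/p}\phi)|^p\,dx$ never appears. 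You also address two points the paper leaves implicit: the admissibility of $\psi$ when $0<q-p+1<1$ or when $v$ is unbounded on $\mathrm{supp}\,\phi$ (handled by the truncation $\min\{v,N\}+\delta$), and the finiteness of $\int_\Omega v^{q-p}\phi^p|\nabla_Xv|^p\,dx$ needed to absorb that term into the left-hand side, which the same truncation secures. In short: correct, same constants, but a direct weak-formulation argument that bypasses the Picone identity and repairs the one shaky step in the paper's chain.
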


	Note that in the case when $q=p$ and $\lambda=0$ in \eqref{Caccioppoli} we have
	\begin{equation}
		\int_{\Omega} \phi^p |\nabla_X v|^p dx \leq p^p \int_{\Omega} v^p |\nabla_X \phi|^p dx.
	\end{equation}

\begin{proof}[Proof of Theorem \ref{thm_Cacci}]
 Setting $u:=v^{q/p}\phi$ in $L(u,v)$ we have
	\begin{align*}
		\int_{\Omega}L(v^{q/p}\phi,v)dx = &\int_{\Omega} |\nabla_X (v^{q/p}\phi)|^p dx + (p-1)\int_{\Omega}v^{q-p}|\phi \nabla_X v|^p dx \\
		& - p\int_{\Omega} |v^{\frac{q-p}{p}}\phi|^{p-1} |\nabla_X v|^{p-2} \langle \nabla_{X} v, \nabla_{X} (v^{q/p}\phi) \rangle dx\\
		=& \int_{\Omega} |\nabla_X (v^{q/p}\phi)|^p dx - (q-p+1)\int_{\Omega}v^{q-p}|\phi \nabla_X v|^p dx \\
		& - p\int_{\Omega} |v^{\frac{q-p}{p}}\phi|^{p-1}  |\nabla_Xv|^{p-2}v^{q/p}  \langle \nabla_{X} v, \nabla_{X} \phi\rangle dx.
	\end{align*}
	In the last line, we have used the equality $	\nabla_X (v^{q/p}\phi) = \frac{q}{p} v^{\frac{q-p}{p}} \phi \nabla_Xv + v^{q/p}\nabla_X \phi$. Noting that 
	\begin{align*}
	\langle \nabla_X v, \nabla_X \phi\rangle &= \sum_{i=1}^{N}X_iv X_i\phi \leq  \sum_{i=1}^{N} |X_iv||X_i\phi| \\
	& \leq \left(  \sum_{i=1}^{N}(X_i v)^2 \right)^{1/2} \left(  \sum_{i=1}^{N} (X_i \phi)^2\right)^{1/2} = |\nabla_{X} v| |\nabla_{X} \phi|,
	\end{align*}
 and recalling Young's inequality in the form
	\begin{equation*}
		ab^{p-1} \leq \frac{a^p}{ps^{p-1}} + \frac{p-1}{p}sb^p, \,\, a,b\geq 0 \, \text{and} \,\, s >0,
	\end{equation*}
with $a= v^{q/p}|\nabla_X \phi|$ and $b= v^{\frac{q-p}{p}}\phi|\nabla_X v|$, we obtain
	\begin{align*}
			0\leq \int_{\Omega}L(v^{q/p}\phi,v) dx= &\int_{\Omega} |\nabla_X (v^{q/p}\phi)|^p dx  + s^{1-p}\int_{\Omega} v^q |\nabla_X \phi|^p dx\\
			& - (q-p+1-s(p-1))\int_{\Omega}v^{q-p}|\phi \nabla_X v|^pdx \\
			\leq & \lambda \int_{\Omega}|v^{q/p }\phi|^p dx + s^{1-p}\int_{\Omega} v^q |\nabla_X \phi|^p dx \\
			& - (q-p+1-s(p-1))\int_{\Omega}v^{q-p}|\phi \nabla_X v|^pdx.
	\end{align*}
Here we have used $\int_{\Omega} |\nabla_X u|^p dx \leq \int_{\Omega}\lambda |u|^pdx$ for the sub-solution of \eqref{Dirichlet_BV}. Thus, we arrive at 
\begin{align}
	\int_{\Omega}v^{q-p}\phi^p|\nabla_X v|^pdx \leq& \frac{s^{1-p}}{ q-p+1-s(p-1)}\int_{\Omega} v^q |\nabla_X \phi|^p dx \\
	&+ \frac{\lambda}{q-p+1-s(p-1)}  \int_{\Omega}v^{q}\phi^p dx\nonumber.
\end{align}	
Now we choose the suitable constant as $s= \frac{q-p+1}{p}$ which leads to
\begin{equation}
	\int_{\Omega}v^{q-p}\phi^p|\nabla_X v|^pdx \leq \left( \frac{p}{q-p+1} \right)^p \int_{\Omega} v^q |\nabla_X \phi|^p dx + \frac{\lambda p}{q-p+1}\int_{\Omega}v^{q}\phi^p dx.
\end{equation} 
This proves Theorem \ref{thm_Cacci}.
\end{proof}
Let us discuss some particular cases of the Caccioppoli inequalities on the Grushin plane and the Heisenberg group. One of the important examples of a sub-Riemannian manifold is the Grushin plane. Recall that the Grushin plane $G$ is the space $\mathbb{R}^2$ with vector fields 
\begin{equation*}
X_1 = \frac{\partial}{\partial x_1}, \,\, \text{and} \,\, X_2 = x_1\frac{\partial}{\partial x_2},
\end{equation*}
and the gradient
\begin{equation}
	\nabla_G = (X_1,X_2)
\end{equation}
for $x:=(x_1,x_2)\in \mathbb{R}^2$.
\begin{cor}
	Let $\Omega \subset G$. Let $v$ be a positive sub-solution of 
	\begin{equation}\label{Dirichlet_BV_Grush}
	\begin{cases}
		- \nabla_{G}\cdot(|\nabla_{G}u|^{p-2}\nabla_{G} u) = \lambda |u|^{p-2}u, \,\,& \text{in} \,\, \Omega,	\\
	u=0,\,\, & \text{on} \,\, \partial \Omega.  
	\end{cases}
	\end{equation}
	Then for every fixed $q>p-1$ and $1<p<\infty$ we have 
	\begin{equation}
	\int_{\Omega}v^{q-p}\phi^p|\nabla_G v|^pdx \leq \left( \frac{p}{q-p+1} \right)^p \int_{\Omega} v^q |\nabla_G \phi|^p dx + \frac{\lambda p}{q-p+1}\int_{\Omega}v^{q}\phi^p dx,
	\end{equation} 
	and for $q=p$ and $\lambda=0$ we have
	\begin{equation}
	\int_{\Omega} \phi^p |\nabla_G v|^p dx \leq p^p \int_{\Omega} v^p |\nabla_G \phi|^p dx,
	\end{equation}
	for all nonnegative functions $\phi \in C^{\infty}_0(\Omega)$.
\end{cor}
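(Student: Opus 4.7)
The corollary is a direct specialization of Theorem \ref{thm_Cacci} to the concrete vector fields defining the Grushin structure, so the plan is to verify that the Grushin plane fits the general framework of Section \ref{Sec3}, identify the operator in \eqref{Dirichlet_BV_Grush} with $\mathcal{L}_p$, and then simply invoke Theorem \ref{thm_Cacci}. There is no genuinely hard step; the only bookkeeping worth doing carefully is the identification of the formal adjoints so that the divergence-form operator in \eqref{Dirichlet_BV_Grush} matches the abstract $\mathcal{L}_p=\nabla_X^*\cdot(|\nabla_X u|^{p-2}\nabla_X u)$.

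First, I take $M=\mathbb{R}^2$ with Lebesgue measure, $N=2$, and the family $X_1=\partial_{x_1}$, $X_2=x_1\partial_{x_2}$. The bracket $[X_1,X_2]=\partial_{x_2}$ together with $X_1$ span the tangent space at every point, so H\"ormander's condition holds and the setup of the introduction applies verbatim with $\nabla_X=\nabla_G$. The spaces $S^{1,p}(\Omega)$ and $\mathring{S}^{1,p}(\Omega)$ and the notion of weak sub-solution \eqref{inq_sub} thereby specialize to the Grushin setting without any change.

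Next, using the formula $X_k^*f=-\sum_j\partial_{x_j}(a_{kj}f)$ with the coefficients read off $X_1,X_2$, one gets $X_1^*f=-\partial_{x_1}f$ and $X_2^*f=-\partial_{x_2}(x_1 f)=-x_1\partial_{x_2}f$ (because $\partial_{x_2}x_1=0$). Consequently,
\begin{equation*}
\mathcal{L}_p u=X_1^*\bigl(|\nabla_G u|^{p-2}X_1u\bigr)+X_2^*\bigl(|\nabla_G u|^{p-2}X_2u\bigr)=-\nabla_G\cdot\bigl(|\nabla_G u|^{p-2}\nabla_G u\bigr),
\end{equation*}
so the Dirichlet problem \eqref{Dirichlet_BV_Grush} is exactly \eqref{Dirichlet_BV} in this special case, and a positive sub-solution of \eqref{Dirichlet_BV_Grush} is precisely a positive sub-solution in the sense needed by Theorem \ref{thm_Cacci}.

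Finally, I apply Theorem \ref{thm_Cacci} with these vector fields and nonnegative $\phi\in C_0^{\infty}(\Omega)$. The general inequality \eqref{Caccioppoli} instantly yields the first displayed bound for all $q>p-1$ and $\lambda\in\mathbb{R}$, and choosing $q=p$ and $\lambda=0$ produces the stated particular case $\int_{\Omega}\phi^p|\nabla_G v|^p dx\le p^p\int_{\Omega}v^p|\nabla_G\phi|^p dx$, exactly as in the remark following Theorem \ref{thm_Cacci}. No additional estimate or identity is needed beyond the adjoint computation above.
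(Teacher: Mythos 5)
Your proposal is correct and matches the paper's (implicit) argument: the paper states this corollary as a direct specialization of Theorem \ref{thm_Cacci} to the Grushin vector fields, and your adjoint computation $X_1^*=-X_1$, $X_2^*=-X_2$ correctly identifies $\mathcal{L}_p$ with $-\nabla_G\cdot(|\nabla_G u|^{p-2}\nabla_G u)$ so that \eqref{Dirichlet_BV_Grush} is an instance of \eqref{Dirichlet_BV}. Nothing further is needed.
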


Let $\mathbb{H}^n$ be the Heisenberg group, that is,  $\mathbb{R}^{2n+1}$ equipped with the group law 
\begin{equation*}
\xi \circ \xi' := (x + x', y + y', t + t'+2(x \cdot y' - x' \cdot y)),
\end{equation*}
where $\xi:= (x,y,t) \in \mathbb{R}^{2n+1}$ with $x\in \mathbb{R}^n$, $y\in \mathbb{R}^n$, $t\in \mathbb{R}$. The dilation operation on the Heisenberg group with respect to the group law has the form
\begin{equation*}
\delta_{\lambda}(\xi) := (\lambda x, \lambda y, \lambda^2 t) \,\, \text{for}\,\, \lambda>0.
\end{equation*} 
The Heisenberg group is a basic example of step 2 stratified Lie groups (Carnot groups).

The Lie algebra $\mathfrak{h}$ of the left-invariant vector fields on the Heisenberg group $\mathbb{H}^n$ is spanned by 
\begin{equation*}
X_j:= \frac{\partial }{\partial x_j} + 2y_j\frac{\partial }{\partial t},
\end{equation*}
\begin{equation*}
Y_j:= \frac{\partial }{\partial y_j} - 2x_j\frac{\partial }{\partial t},
\end{equation*}
with their (non-zero) commutator
\begin{equation*}
T=[X_j,Y_j]= - 4 \frac{\partial}{\partial t}.
\end{equation*} 
The horizontal gradient on $\mathbb{H}^n$ is given by 
\begin{equation*}
\nabla_{H}:= (X_1,\ldots,X_n,Y_1,\ldots,Y_n),
\end{equation*} 
so the sub-Laplacian on $\mathbb{H}^n$ is given by
\begin{equation*}
\mathcal{L}:=\sum_{j=1}^{n}X_j^2 + Y_j^2. 
\end{equation*} 

Note that in the case of the general stratified Lie groups $\mathbb{G}$ (Carnot groups) we have $X_k = - X^*_k$, so the formula \eqref{Lp} can be rewritten as 
\begin{equation}
\mathcal{L}_p f:=	\nabla_{H}\cdot(|\nabla_{H}f|^{p-2}\nabla_{H} f),
\end{equation}
which is called the $p$-sub-Laplacian.

\begin{cor}
	Let $\Omega \subset \mathbb{H}^n$. Let $v$ be a positive sub-solution of 
	\begin{equation}\label{Dirichlet_BV_Heis}
	\begin{cases}
		- \nabla_{H}\cdot(|\nabla_{H}u|^{p-2}\nabla_{H} u) = \lambda |u|^{p-2}u, \,\,& \text{in} \,\, \Omega,	\\
	u=0,\,\, & \text{on} \,\, \partial \Omega.  
	\end{cases}
	\end{equation} 
	Then for every fixed $q>p-1$ and $1<p<\infty$ we have 
	\begin{equation}
	\int_{\Omega}v^{q-p}\phi^p|\nabla_H v|^pdx \leq \left( \frac{p}{q-p+1} \right)^p \int_{\Omega} v^q |\nabla_H \phi|^p dx + \frac{\lambda p}{q-p+1}\int_{\Omega}v^{q}\phi^p dx,
	\end{equation} 
	and for $q=p$ and $\lambda=0$ we have
	\begin{equation}
	\int_{\Omega} \phi^p |\nabla_H v|^p dx \leq p^p \int_{\Omega} v^p |\nabla_H\phi|^p dx,
	\end{equation}
	for all nonnegative functions $\phi \in C^{\infty}_0(\Omega)$.
\end{cor}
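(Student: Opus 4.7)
The plan is to derive this corollary as a direct specialisation of Theorem \ref{thm_Cacci} to the Heisenberg vector fields. The Heisenberg group $\mathbb{H}^n$ is a smooth manifold of dimension $2n+1$ equipped with the Lebesgue volume form, and $\{X_j,Y_j\}_{j=1}^n$ is a family of $N=2n$ smooth vector fields on $\mathbb{H}^n$ satisfying H\"ormander's condition via $[X_j,Y_j]=-4\,\partial/\partial t$. Thus the standing hypotheses on $(M,\{X_k\}_{k=1}^N)$ from the introduction are all met, and the horizontal gradient $\nabla_X$ appearing in \eqref{Lp} is literally $\nabla_H$ in this setting.

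The one bit of bookkeeping is to match sign conventions. As recorded in the paragraph immediately preceding the corollary, each left-invariant vector field on a stratified Lie group satisfies $X_k^{*}=-X_k$, and consequently
\begin{equation*}
\nabla_X^{*}\cdot\bigl(|\nabla_X f|^{p-2}\nabla_X f\bigr)=-\nabla_H\cdot\bigl(|\nabla_H f|^{p-2}\nabla_H f\bigr).
\end{equation*}
Hence the equation in \eqref{Dirichlet_BV_Heis} is nothing other than the general Dirichlet problem \eqref{Dirichlet_BV} rewritten in Heisenberg notation, and a positive sub-solution of \eqref{Dirichlet_BV_Heis} is in particular a positive sub-solution of \eqref{Dirichlet_BV} in the sense of \eqref{inq_sub} with the horizontal Heisenberg vector fields as the underlying family.

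With this identification, the conclusion follows by applying Theorem \ref{thm_Cacci} to $v$ with the vector fields $\{X_j,Y_j\}_{j=1}^n$, reading $\nabla_X$ as $\nabla_H$ throughout \eqref{Caccioppoli}. The second displayed inequality is then obtained simply by specialising the resulting estimate at $q=p$ and $\lambda=0$: the coefficient $\bigl(p/(q-p+1)\bigr)^p$ collapses to $p^p$ and the last term drops out.

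I do not foresee any substantive obstacle: once the formal framework of stratified Lie groups is aligned with the general vector-field setup, as is done explicitly in the excerpt just before the corollary, the statement is a verbatim specialisation of Theorem \ref{thm_Cacci}. The only mildly delicate point to verify is that the functional class $\mathring{S}^{1,p}(\Omega)$ used implicitly in the hypothesis of Theorem \ref{thm_Cacci} agrees with the corresponding Heisenberg class; this is automatic from its definition as the completion of $C_0^1(\Omega)$ in the norm $J_p$, which depends on the vector fields only through $|\nabla_X u|=|\nabla_H u|$.
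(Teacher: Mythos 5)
Your proposal is correct and matches the paper's intent exactly: the paper offers no separate argument for this corollary, presenting it as an immediate specialisation of Theorem \ref{thm_Cacci} to the Heisenberg vector fields, with the sign identification $X_k^*=-X_k$ on stratified groups handled in the paragraph just before the statement, precisely as you describe. Your additional remarks on H\"ormander's condition and the function class are harmless bookkeeping that the paper leaves implicit.
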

Here the Caccioppoli inequality on the Heisenberg group seems to be also new.

\end{document}